\title[Mod 3 generators]
{Mod $3$ Chern classes and generators}
\author{Masaki Kameko}
\address{Department of Mathematical Sciences,
Shibaura Institute of Technology,
307 Minuma-ku Fukasaku, Saitama-City 337-8570, Japan}
\email{kameko@shibaura-it.ac.jp}
\thanks{The author is partially supported 
by the Japan Society for the Promotion of Science, 
Grant-in-Aid for Scientific Research (C) 25400097.}
\subjclass{55R40, 55R35}
\keywords{Chern class, exceptional Lie group, complex representation}
\newtheorem{theorem}{Theorem}[section]
\newtheorem{proposition}[theorem]{Proposition}
\newtheorem{lemma}[theorem]{Lemma}
\newtheorem{corollary}[theorem]{Corollary}
\theoremstyle{definition}
\newcommand{\spin}{\mathrm{Spin}}
\begin{document}
\maketitle

\begin{abstract}
We show the non-triviality of the mod $3$ Chern class of degree $324$ of the adjoint representation of the exceptional Lie group $E_8$.
\end{abstract}


\section{Introduction} \label{section:1}


Let $p$ be a prime number. In the study of mod $p$ cohomology of the classifying space of 
a simply connected, simple, compact connected Lie group $G$, 
Stiefel-Whitney classes and Chern classes play an important role. For example, the mod $2$ cohomology of 
the classifying space of the exceptional Lie group $E_6$ 
is generated by two generators of degree $4$ and of degree $32$ 
as an algebra over the mod $2$ Steenrod algebra and Toda pointed out that the generator of 
degree $32$ could be given as the Chern class of the irreducible representation $\rho_6:E_6 \to SU(27)$
 in \cite{toda-1973}.
Mimura and Nishimoto \cite{mimura-nishimoto-2007}, Kono \cite{kono-2005} and the 
author \cite{kameko-2012} proved that Stiefel-Whitney classes 
$w_{16}(\rho_4)$, $w_{128}(\rho_8)$ and 
Chern classes $c_{16}(\rho_6)$, $c_{32}(\rho_7)$ are algebra generators 
of the mod $2$ cohomology of the classifying space $BG$ for $G=F_4, E_8, E_6, E_7$, 
where $\rho_4$,  $\rho_8$  are real  irreducible representations of dimension 
$26$, $248$, $\rho_6$, $\rho_7$ are complex irreducible representations of dimension $27$, $56$,
respectively.
For $G=F_4, E_6, E_7$, the mod $2$ cohomology of the classifying space is generated 
by two elements, that is, one is the element of degree $4$ and the other is 
$w_{16}(\rho_4)$, $c_{16}(\rho_6)$, $c_{32}(\rho_7)$, respectively.
In the case $G=E_8$ and $p=2, 3$, the mod $p$ cohomology of the classifying space is not yet computed.
Since the non-triviality of the Stiefel-Whitney class $w_{128}(\rho_8)$ tells us that the differentials in the 
spectral sequence vanishes on the corresponding element, we expect that 
it not only gives us  a nice description for the generator
 but also helps us in the computation of the mod $2$ cohomology of $BE_8$. 
 
 
This paper is the sequel of \cite{kameko-2012} in the sense that 
we consider the mod $3$ analogue of the above results. 
In particular, we prove  the non-triviality of the mod $3$ Chern class $c_{162}(\rho_8)$ of degree $324$. 
For  an odd prime number $p$ and for a  simply-connected, simple, compact connected Lie group, 
the Rothenberg-Steenrod spectral sequence 
collapses at the $E_2$-level and so 
at least additively  the mod $p$ cohomology is isomorphic to the cotorsion product 
of  the mod $p$ cohomology of $G$ except for the case $p=3$, $G=E_8$.
 In \cite{kameko-mimura-2007},
we proved that there exists an algebra generator of degree greater than or equal to $324$ in the mod $3$ cohomology ring of $BE_8$.  On the other hand,  in \cite{mimura-sambe-1980}, Mimura and Sambe proved that the $E_2$-term of the Rothenberg-Steenrod spectral sequence 
is generated as an algebra by elements of degree less than or equal to $168$.
Hence the spectral sequence must not collapse at the $E_2$-level. 
We expect that, in the mod $3$ cohomology,  the mod $3$ Chern class $c_{162}(\rho_8)$ plays an important  role  
similar to that of the  Stiefel-Whitney class $w_{128}(\rho_8)$ in the mod $2$ cohomology.


Now, we state our main theorem. Let $T$ be a fixed maximal torus of the exceptional Lie group $F_4$. We choose a maximal non-toral elementary abelian $3$-subgroup $A$ of $F_4$
so that $T\cap A$ is nontrivial. We refer the reader to the paper of Andersen, Grodal, M{\o}ller and Viruel \cite[Section 8]{andersen-2008} for the details of non-toral elementary abelian $p$-subgroups of exceptional Lie
 groups and their Weyl groups.
Let $\mu$ be a subgroup of $T\cap A$ of order $3$. The group  $\mu$ is the cyclic group of order $3$. 
We consider the following diagram of inclusion maps.
\[
\begin{diagram}
\node{T}\arrow{e} \node{F_4}\arrow{e} \node{G} \\
\node{\mu} \arrow{n} \arrow{e} \node{A.}\arrow{n} 
\end{diagram}
\]
We denote by $\iota:\mu \to G$ the inclusion map of $\mu$ to $G=F_4, E_6, E_7, E_8$.
The mod $3$ cohomology $H^{*}(B\mu;\mathbb{Z}/3)$ of the classifying space $B\mu$ is isomorphic to
\[
\mathbb{Z}/3[u_2]\otimes \Lambda(u_1),
\]
where $u_2$ is the image of the mod $3$ Bockstein homomorphism of a generator $u_1$ of $H^{1}(B\mu;\mathbb{Z}/3)=\mathbb{Z}/3$.
From now on, we consider complex representations only 
and we denote  complexifications of 
real representations $\rho_4$, $\rho_8$ by  the same symbols
$\rho_4$, $\rho_8$, respectively.


\begin{theorem}
\label{theorem:1.1}
The total Chern classes $c(\iota^*(\rho_i))$ of the above induced representations  $\iota^*(\rho_i)$, where $i=4, 6, 7,8$,  are as follows:
\begin{eqnarray*}
c(\iota^{*}(\rho_4))&=&1-u_2^{18}, 
\\
c(\iota^{*}(\rho_6))&=&1-u_2^{18},
\\
c(\iota^{*}(\rho_7))&=& (1-u_2^{18})^2=1+u_2^{18}+u_2^{36},
\\
c(\iota^{*}(\rho_8))&=&(1-u_2^{18})^{9}=1-u_2^{162}.
\end{eqnarray*}
\end{theorem}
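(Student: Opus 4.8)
The plan is to identify each restricted representation $\iota^{*}(\rho_i)$ explicitly as a complex representation of the cyclic group $\mu$ of order $3$, and then read off its total Chern class. A complex $\mu$-representation is a sum of one-dimensional characters; if $\psi$ generates the character group of $\mu$, then the line bundle over $B\mu$ associated with $\psi$ has first Chern class a generator of $H^{2}(B\mu;\mathbb{Z}/3)$, which we take to be $u_2$, so that $\psi^{k}$ contributes the factor $1+ku_2$ to the total Chern class. Writing $\iota^{*}(\rho_i)=a_i\cdot\mathbf{1}\oplus n_i\cdot\psi\oplus m_i\cdot\psi^{2}$ as a $\mu$-module, we therefore have $c(\iota^{*}(\rho_i))=(1+u_2)^{n_i}(1-u_2)^{m_i}$. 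Since $\rho_4$ and $\rho_8$ are complexifications of real representations, $\rho_7$ is symplectic, and $\iota^{*}(\rho_6)$ will turn out to be $\iota^{*}(\rho_4)\oplus\mathbf{1}$, every $\iota^{*}(\rho_i)$ is self-conjugate, so $n_i=m_i$ and $c(\iota^{*}(\rho_i))=(1-u_2^{2})^{n_i}$. Using the ``freshman's dream'' identities $(1-u_2^{2})^{9}=1-u_2^{18}$ and $(1-u_2^{18})^{9}=1-u_2^{162}$ in $\mathbb{Z}/3[u_2]$, the theorem becomes equivalent to the numerical assertions $n_4=n_6=9$, $n_7=18$, $n_8=81$.

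The first real step is to pin down the subgroup $\mu$. Its generator $t$ lies in the non-toral elementary abelian $3$-subgroup $A$, and since $A$ is abelian we have $A\subseteq C_{F_4}(t)$; hence $C_{F_4}(t)$ contains a non-toral elementary abelian $3$-subgroup. Among the conjugacy classes of order $3$ in $F_4$, only one has this property, namely the class whose centralizer is the rank-$4$ subsystem subgroup $K$ of type $A_2+A_2$, i.e. $K=(\mathrm{SU}(3)\times\mathrm{SU}(3))/C$ for a diagonal central subgroup $C$ of order $3$; the centralizers of the other two classes have semisimple part of type $C_3$, respectively $B_3$, together with a one-dimensional central torus, and in $\mathrm{Sp}(6)$ and $\mathrm{Spin}(7)$ every elementary abelian $3$-subgroup is toral. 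Therefore $C_{F_4}(t)\cong K$ and, since $t$ centralizes $C_{F_4}(t)$, it generates the center $Z(K)\cong\mathbb{Z}/3$; equivalently, up to conjugacy in $F_4$, $t=\exp\!\bigl(\tfrac{2\pi i}{3}\varpi^{\vee}\bigr)$, where $\varpi^{\vee}$ is the fundamental coweight of $F_4$ dual to the simple root of mark $3$ in the extended Dynkin diagram. This is the step that rests on the description of the non-toral elementary abelian $3$-subgroups of $F_4$ and their Weyl groups in \cite[Section~8]{andersen-2008}; note that because the Weyl group $\mathrm{SL}_3(\mathbb{F}_3)$ of $A$ acts transitively on the lines of $A$, the conjugacy class of $\mu$ does not depend on the choices made in its definition.

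Next one computes $n_4$ and the analogous number $n$ for the $52$-dimensional adjoint representation $\mathrm{ad}$ of $F_4$. Restricting $\rho_4$ and $\mathrm{ad}$ to $K$, the element $t$ acts on a summand $(\mathbf{r},\mathbf{s})$ of a representation of $\mathrm{SU}(3)\times\mathrm{SU}(3)$ through the central character of $\mathbf{r}$ (trivially on $\mathbf{1}$ and $\mathbf{8}$, by $\omega=e^{2\pi i/3}$ on $\mathbf{3}$, by $\omega^{2}$ on $\overline{\mathbf{3}}$); equivalently, $t$ acts on the $\lambda$-weight space by $\omega^{k_\lambda}$, where $k_\lambda$ is the coefficient of the mark-$3$ simple root in the expansion of $\lambda$ over the simple roots. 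Since the weights of $\rho_4$ are the $24$ short roots of $F_4$ together with $0$ of multiplicity $2$, and those of $\mathrm{ad}$ are the $48$ roots together with $0$ of multiplicity $4$, a direct count over the root system of $F_4$ gives $\iota^{*}(\rho_4)=8\cdot\mathbf{1}\oplus 9\psi\oplus 9\psi^{2}$ and $\iota^{*}(\mathrm{ad})=16\cdot\mathbf{1}\oplus 18\psi\oplus 18\psi^{2}$, so $n_4=9$ and $n=18$. Consequently $c(\iota^{*}(\rho_4))=(1-u_2^{2})^{9}=1-u_2^{18}$ and $c(\iota^{*}(\mathrm{ad}))=(1-u_2^{2})^{18}=(1-u_2^{18})^{2}$.

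Finally one propagates these along $F_4\subseteq E_6\subseteq E_7\subseteq E_8$ with standard branchings. From $\mathbf{27}|_{F_4}=\rho_4\oplus\mathbf{1}$ one gets $\iota^{*}(\rho_6)=\iota^{*}(\rho_4)\oplus\mathbf{1}$, so $c(\iota^{*}(\rho_6))=1-u_2^{18}$. From $E_6\times U(1)\subseteq E_7$ with $\mathbf{56}=\mathbf{27}\oplus\overline{\mathbf{27}}\oplus 2\cdot\mathbf{1}$ as an $E_6$-module, hence $\mathbf{56}|_{F_4}=2\rho_4\oplus 4\cdot\mathbf{1}$, one gets $c(\iota^{*}(\rho_7))=(1-u_2^{18})^{2}=1+u_2^{18}+u_2^{36}$. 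From $E_6\times\mathrm{SU}(3)\subseteq E_8$ with $\mathbf{248}=(\mathbf{78},\mathbf{1})\oplus(\mathbf{1},\mathbf{8})\oplus(\mathbf{27},\mathbf{3})\oplus(\overline{\mathbf{27}},\overline{\mathbf{3}})$, hence $\rho_8|_{F_4}=\mathrm{ad}\oplus 7\rho_4\oplus 14\cdot\mathbf{1}$, one gets $c(\iota^{*}(\rho_8))=(1-u_2^{18})^{2}(1-u_2^{18})^{7}=(1-u_2^{18})^{9}=1-u_2^{162}$. The one genuinely delicate point is the second step---showing that the generator of $\mu$ lies in the $A_2+A_2$ conjugacy class, equivalently that it generates the center of $K$---for which the non-torality of $A$, via \cite{andersen-2008}, is essential; everything else is bookkeeping with well-known branching rules and with the two Frobenius identities in $H^{*}(B\mu;\mathbb{Z}/3)$.
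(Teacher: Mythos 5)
Your proposal is correct, but it reaches the theorem by a genuinely different route from the paper. The paper never identifies the conjugacy class of the generator of $\mu$ nor the actual $\mu$-module structure of $\iota^*(\rho_i)$: it restricts everything to $\spin(8)$ (Propositions~\ref{proposition:2.1} and \ref{proposition:2.2}), uses Lemma~\ref{lemma:3.1} (the factorization through the Dickson-invariant ring $(H^{*}(BA;\mathbb{Z}/3)/\sqrt{0})^{SL_3(\mathbb{Z}/3)}$ together with Wilkerson's computation of $\iota_1^*$ on $c_{3,0},c_{3,1},c_{3,2}$) to force $c(\iota_0^*(\lambda_1+\Delta))$ and $c(\iota_0^*(\lambda_2))$ to have the form $1+\alpha u_2^{18}$ for degree reasons, and then pins down $\alpha=-1$ by a divisibility-by-$(1-u_2^2)$ argument. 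You instead locate the generator $t$ of $\mu$ in the unique order-$3$ class of $F_4$ with centralizer of type $A_2+A_2$ (using non-torality of $A$ and the torality of elementary abelian $3$-subgroups in the $B_3$- and $C_3$-type centralizers), compute $\iota^*(\rho_4)=8\cdot\mathbf{1}\oplus 9\psi\oplus 9\psi^2$ and $\iota^*(\rho_4')=16\cdot\mathbf{1}\oplus 18\psi\oplus 18\psi^2$ explicitly, and propagate with branching rules; your identity $\rho_8|_{F_4}=\rho_4'\oplus 7\rho_4\oplus 14\cdot\mathbf{1}$ is exactly equivalent to the paper's $j^*(\rho_8)=32+8\lambda_1+8\Delta+\lambda_2$, so the two computations agree. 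What your approach buys: it is independent of Lemma~\ref{lemma:3.1} and of Wilkerson's invariant-theoretic input, it determines the full $\mu$-representation (so the total Chern class comes out with no residual sign or intermediate-degree ambiguity), and it makes the answer transparently a consequence of the Frobenius identity $(1-u_2^2)^9=1-u_2^{18}$. What the paper's approach buys: it needs no classification of order-$3$ elements of $F_4$ and no $A_2+A_2$ branching, and the invariant-theoretic Lemma~\ref{lemma:3.1} it relies on is needed anyway for the indecomposability statement of Corollary~\ref{corollary:1.2} (which your argument, as written, does not address and would still require that lemma). The one place where you lean on facts outside the paper --- the Kac-coordinate classification of the three order-$3$ classes of $F_4$ and the torality claim for $Sp(6)$ and $\spin(7)$ --- is standard and correctly invoked, so I see no gap.
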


As a corollary of this theorem, using Lemma~\ref{lemma:3.1},  we have the following:

\begin{corollary}
\label{corollary:1.2}
The Chern classes 
$c_{18}(\rho_4)$, $c_{18}(\rho_6)$, $c_{18}(\rho_7)$, $c_{162}(\rho_8)$
are nontrivial in $H^{*}(BF_4;\mathbb{Z}/3)$, 
$H^{*}(BE_6;\mathbb{Z}/3)$,
$H^{*}(BE_7;\mathbb{Z}/3)$,
$H^{*}(BE_8;\mathbb{Z}/3)$, respectively.
Moreover, the Chern classes 
$c_{18}(\rho_4)$, $c_{18}(\rho_6)$, $c_{18}(\rho_7)$ are indecomposable, so that they are algebra generators.
\end{corollary}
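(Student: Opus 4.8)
The plan is to assume Theorem~\ref{theorem:1.1} and to deduce the two assertions of Corollary~\ref{corollary:1.2} in turn. For the non-triviality I would use nothing beyond naturality of Chern classes: the inclusion $\iota\colon\mu\to G$ induces $\iota^{*}(c_{k}(\rho_{i}))=c_{k}(\iota^{*}(\rho_{i}))$ in $H^{*}(B\mu;\mathbb{Z}/3)$, and extracting the homogeneous components of the total Chern classes computed in Theorem~\ref{theorem:1.1} gives $c_{18}(\iota^{*}(\rho_{4}))=c_{18}(\iota^{*}(\rho_{6}))=-u_{2}^{18}$, $c_{18}(\iota^{*}(\rho_{7}))=u_{2}^{18}$ and $c_{162}(\iota^{*}(\rho_{8}))=-u_{2}^{162}$, each of which is nonzero in $\mathbb{Z}/3[u_{2}]\otimes\Lambda(u_{1})$; packaging this comparison is the purpose of Lemma~\ref{lemma:3.1}. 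Hence the four Chern classes in the statement are nonzero in the respective rings. No indecomposability is claimed for $c_{162}(\rho_{8})$ --- as Section~\ref{section:1} explains, one rather expects it to be decomposable --- so it remains to treat $c_{18}(\rho_{4})$, $c_{18}(\rho_{6})$ and $c_{18}(\rho_{7})$.

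I would first reduce the indecomposability of $c_{18}(\rho_{6})$ and $c_{18}(\rho_{7})$ to that of $c_{18}(\rho_{4})$. Using the branching rules $\rho_{6}|_{F_{4}}=\mathbf{1}\oplus\rho_{4}$ and $\rho_{7}|_{F_{4}}=4\cdot\mathbf{1}\oplus 2\cdot\rho_{4}$, together with the vanishing of the odd Chern classes of $\rho_{4}$ (being a complexification, $\rho_{4}\cong\overline{\rho_{4}}$, so $c_{2j+1}(\rho_{4})=0$ in mod $3$ cohomology), naturality along $BF_{4}\to BE_{6}$ and $BF_{4}\to BE_{7}$ shows that $c_{18}(\rho_{6})$ restricts to $c_{18}(\rho_{4})$ and that $c_{18}(\rho_{7})$ restricts to $2\,c_{18}(\rho_{4})$ modulo decomposable elements of $H^{*}(BF_{4};\mathbb{Z}/3)$. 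As a restriction homomorphism sends decomposables to decomposables, indecomposability of $c_{18}(\rho_{4})$ in $H^{*}(BF_{4};\mathbb{Z}/3)$ implies that of $c_{18}(\rho_{6})$ and $c_{18}(\rho_{7})$.

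It remains to show $c_{18}(\rho_{4})$ is indecomposable in $H^{*}(BF_{4};\mathbb{Z}/3)$, which I would do by detecting it on the maximal non-toral subgroup $A\cong(\mathbb{Z}/3)^{3}$. As is established in the course of proving Theorem~\ref{theorem:1.1}, $\rho_{4}|_{A}$ is the sum of all $26$ nontrivial characters of $A$. Let $V\subset H^{2}(BA;\mathbb{Z}/3)$ be the image of the Bockstein $\beta\colon H^{1}(BA;\mathbb{Z}/3)\to H^{2}(BA;\mathbb{Z}/3)$, a $3$-dimensional $\mathbb{F}_{3}$-space which generates a polynomial subalgebra $\mathbb{Z}/3[V]$; the first Chern classes of the nontrivial characters of $A$ run exactly over the $26$ nonzero elements of $V$. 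Hence $c(\rho_{4}|_{A})=\prod_{0\ne v\in V}(1+v)\in\mathbb{Z}/3[V]$, and its degree-$36$ component $c_{18}(\rho_{4}|_{A})$ is nonzero, since restricting it further to $B\mu$ yields $-u_{2}^{18}\ne0$ by Theorem~\ref{theorem:1.1}. On the other hand, the image of $H^{*}(BF_{4};\mathbb{Z}/3)\to H^{*}(BA;\mathbb{Z}/3)$ is invariant under the Weyl group $N_{F_{4}}(A)/A\cong SL_{3}(\mathbb{F}_{3})$ (see \cite[Section 8]{andersen-2008}); composing with the $SL_{3}(\mathbb{F}_{3})$-equivariant ring surjection $H^{*}(BA;\mathbb{Z}/3)\to\mathbb{Z}/3[V]$ that kills the exterior generators, one sees that $H^{*}(BF_{4};\mathbb{Z}/3)$ maps into $\mathbb{Z}/3[V]^{SL_{3}(\mathbb{F}_{3})}$. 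The latter is a polynomial ring with generators in cohomological degrees $26$, $36$ and $48$ (the Dickson-type invariants of $SL_{3}(\mathbb{F}_{3})$), so it has no nonzero element in positive degree below $26$. Therefore a decomposable class of $H^{*}(BF_{4};\mathbb{Z}/3)$, being a sum of products of classes each mapping either to $0$ or into degree $\ge 26$, maps to $0$ in degree $36$; since $c_{18}(\rho_{4})$ maps to $c_{18}(\rho_{4}|_{A})\ne0$, it is indecomposable.

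The hard part is the last step, and it is hard only inasmuch as it quotes two pieces of structure theory: that $\rho_{4}|_{A}$ is the sum of the nontrivial characters of $A$ (equivalently, transitivity of $N_{F_{4}}(A)/A$ on the nonzero characters, combined with faithfulness of $\rho_{4}$), and the ring structure of the $SL_{3}(\mathbb{F}_{3})$-invariants in $\mathbb{Z}/3[V]$ --- in particular that nothing occurs there in positive degree below $26$. A less self-contained alternative would be to quote Toda's presentation of $H^{*}(BF_{4};\mathbb{Z}/3)$, in which there is exactly one indecomposable generator in degree $36$, and to match $c_{18}(\rho_{4})$ with it.
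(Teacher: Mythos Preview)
Your argument is correct, but it is more elaborate than the paper's. The paper derives indecomposability directly from Lemma~\ref{lemma:3.1}: since $\iota\colon\mu\to G$ factors through $F_4$ for each $G=F_4,E_6,E_7$, the image of $\iota^*\colon H^{*}(BG;\mathbb{Z}/3)\to H^{*}(B\mu;\mathbb{Z}/3)/\sqrt{0}$ lies in $\mathbb{Z}/3[u_2^{18}]$, where the smallest positive degree is $36$. Hence any decomposable class in $H^{36}(BG;\mathbb{Z}/3)$ maps to zero, while $c_{18}(\rho_i)$ maps to $\pm u_2^{18}\neq 0$ by Theorem~\ref{theorem:1.1}. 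This handles all three groups at once, with no need for the branching rules $\rho_6|_{F_4}$, $\rho_7|_{F_4}$, the vanishing of odd Chern classes, or the explicit description of $\rho_4|_A$. Your detour through $BA$ and the Dickson invariants of $SL_3(\mathbb{F}_3)$ is essentially re-proving Lemma~\ref{lemma:3.1} (which already passes through $A$) rather than applying it; what you gain is a self-contained account, at the cost of length. One small misreading: Section~\ref{section:1} does not suggest that $c_{162}(\rho_8)$ should be \emph{decomposable}; on the contrary, it notes that $H^{*}(BE_8;\mathbb{Z}/3)$ has an algebra generator of degree $\geq 324$, and the hope is precisely that $c_{162}(\rho_8)$ furnishes it. The corollary stops short of asserting indecomposability only because Lemma~\ref{lemma:3.1} cannot detect it: $u_2^{162}=(u_2^{18})^9$ lies in the decomposable part of $\mathbb{Z}/3[u_2^{18}]$.
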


This paper is organized as follows:
In Section~\ref{section:2}, we recall complex representations $\rho_4$, $\rho_6$, $\rho_7$, $\rho_8$
and their restrictions to $\spin(8)$.
In Section~\ref{section:3}, we prove Theorem~\ref{theorem:1.1}.
We end  this paper by showing the non-triviality of the mod $5$ Chern class $c_{100}(\rho_8)$ of $BE_8$
in the appendix.


\section{Complex representations}  \label{section:2}


In this section, we consider complex representations $\rho_4, \rho_6, \rho_7, \rho_8$ 
in Theorem~\ref{theorem:1.1} and the complexification of the adjoint representation $\rho'_4$ of $F_4$ and their restrictions to $\spin(8)$.
For the details of representation rings of Spin groups and cyclic groups,  we refer the reader to standard textbooks on representation theory, e.g. Husemoller's book \cite{husemoller-book-1994} and/or 
the  book of Br\"ocker and T. tom Dieck \cite{brocker-tomdieck-1995}.

First, we recall complex representation ring of $\spin(2n)$.
Let us consider the following pull-back diagram.
\[
\begin{diagram}
\node{\tilde{T}^n} \arrow{e,t}{\tilde{k}_n} \arrow{s,l}{\pi} 
\node{\spin(2n)} \arrow{s,r}{\pi}
\\
\node{T^n} \arrow{e,t}{k_n} \node{SO(2n),}
\end{diagram}
\]
where $SO(2n)$ is the special orthogonal group, $\pi:\spin(2n)\to SO(2n)$ is the universal covering, $T^n$ is the maximal torus of $SO(2n)$ consisting of 
matrices of the form
\[
\left(
\begin{array}{cr|ccc|cr}
\cos \theta_1 & -\sin \theta_1 & & & & \\ 
\sin \theta_1 & \cos \theta_1 & & & & \\   \hline 
&  & & & & \\ 
 &  & & \ddots & & \\  
  &  & & & &  \\
   \hline
  &  & &  & & \cos \theta_n & -\sin \theta_n \\ 
  &  & &  &  & \sin \theta_n & \cos  \theta_n 
  \end{array}  \right), 
  \]
and  $\tilde{T}^n$ is a maximal torus of $\spin(2n)$.
The complex representation ring of
\[
S^1=\left\{ 
\left( \begin{array}{cr} \cos \theta & -\sin \theta \\ \sin \theta & \cos \theta \end{array} 
\right) \right\}
\]
 is $R(S^1)=\mathbb{Z}[z,z^{-1}]$ where $z$ is represented by the canonical complex line bundle.
 Considering $T^n$ as the product of $n$ copies of $S^1$'s,
let $p_i:T^n \to S^1$ be the projection to the $i$-th factor.
We denote by $z_i$ the element 
$p_i^{*}(z)$, $\pi^{*}(p_i^{*}(z))$ in $R(T^n)$, $R(\tilde{T}^n)$, respectively, 
so that $\pi^{*}(z_i)=z_i$.
Then, we have
\begin{eqnarray*}
R(T^n)&=& \mathbb{Z}[z_1, \dots, z_n, (z_1\cdots z_n)^{-1}], \\
R(\tilde{T}^n)&=& \mathbb{Z}[z_1, \dots, z_n, (z_1\cdots z_n)^{-1/2}]
\end{eqnarray*}
and the complex representation ring of $\spin(2n)$ is 
\[
\mathbb{Z}[\lambda_1, \dots, \lambda_{n-1}, \Delta^{+}, \Delta^-]
\]
where
 \begin{eqnarray*}
 \tilde{k}_n^*(\lambda_1) &=& \sum_{i=1}^n (z_i + z_i^{-1}), 
 \\
  \tilde{k}_n^*(\lambda_2) &=& \sum_{1\leq i<j  \leq n} (z_i+z_i^{-1})(z_j+z_j^{-1}), 
  \\
    \tilde{k}_n^*(\Delta^+) &=& \sum_{\varepsilon_1 \cdots \varepsilon_n=1} (z_1^{\varepsilon_1}\cdots z_n^{\varepsilon_n})^{1/2}, 
    \\
     \tilde{k}_n^*(\Delta^-) &=& \sum_{\varepsilon_1 \cdots \varepsilon_n=-1} (z_1^{\varepsilon_1}\cdots z_n^{\varepsilon_n})^{1/2}, 
\end{eqnarray*}
and $\varepsilon_i \in \{ \pm 1\}$. For the sake of notational simplicity, from now on, we write $\Delta$ for $\Delta^++\Delta^-$. 
Let $i:\mu\to S^1$ be the inclusion map. We denote by $z$ the generator $i^*(z)$ of $R(\mu)$.
Then, it is also known that $R(\mu)=\mathbb{Z}[z]/(z^3)$.

Next, we recall complex representations $\rho_4$, $\rho_6$, $\rho_7$, $\rho_8$ of dimension $26$, $27$, $56$, $248$ in Section~\ref{section:1}
and the complexification $\rho_4'$ of the adjoint representation of $F_4$.
We consider the following commutative diagram.
\[
\begin{diagram}
\node{\spin(8)} \arrow{e,t}{i_{8}} \arrow{s,r}{ j_8 } 
\node{\spin(10)} \arrow{e,t}{ i_{10} } \arrow{s,r}{ j_{10} } 
\node{\spin(12)} 
\arrow{e,t}{i_{14} \circ i_{12}} 
\arrow{s,r}{ j_{12} } 
\node{\spin(16)} \arrow{s,r}{j_{16}}
\\
\node{F_4} \arrow{e,t}{i_4} \node{E_6} \arrow{e,t}{i_6} \node{E_7} \arrow{e,t}{i_7} \node{E_8,}
\end{diagram}
\]
where 
\[
i_{2n-2}:\spin(2n-2) \to \spin(2n)
\]
is the obvious inclusion map.
For $\rho_4$, $\rho_4'$, we refer the reader to Yokota's paper \cite{yokota-1967-f4}.  For 
$\rho_6, \rho_7$, we refer the reader to Adams' book \cite[Corollaries 8.3, 8.2]{adams-1996}.
For $E_8$, from the construction of $E_8$ in Adams \cite[Section 7]{adams-1996} and the fact that the adjoint representation of $\spin(2n)$ is the 
second exterior power of the standard representation, we have the following proposition.


\begin{proposition} 
\label{proposition:2.1}
We have
\begin{eqnarray*}
j_8^{*}(\rho_4)&=& 2+\lambda_1+\Delta, \\
j_8^{*}(\rho_4')&=& 4 + \lambda_1+\Delta+\lambda_2, \\
j_{10}^*(\rho_6)&=& 1+\lambda_1+\Delta^+, 
\\
j_{12}^*(\rho_7)&=& 2\lambda_1+\Delta^-,
\\
j_{16}^*(\rho_8)&=& 8+\lambda_2+\Delta^+, 
\end{eqnarray*}
in $R(\spin(8))$, $R(\spin(8))$, $R(\spin(10))$, $R(\spin(12))$, $R(\spin(16))$, respectively.
\end{proposition}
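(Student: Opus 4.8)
The plan is to obtain each of the five identities by combining a branching rule already in the literature with a restriction along the standard inclusions $\spin(2n-2)\hookrightarrow\spin(2n)$, and then rewriting the result in terms of the generators $\lambda_1,\lambda_2,\Delta^{+},\Delta^{-}$ of $R(\spin(2n))$ recalled above. Two elementary remarks will be used. First, writing $V_{2n}=\lambda_1$ for the vector representation, the adjoint representation $\mathfrak{so}(2n)=\Lambda^2 V_{2n}$ satisfies $\Lambda^2 V_{2n}=\lambda_2+n$ in $R(\spin(2n))$, because the nonzero weights of $\Lambda^2 V_{2n}$ are precisely the $e_i\pm e_j$ with $i<j$ recorded by $\tilde k_n^{*}(\lambda_2)$, while the remaining $n$ weights $e_i-e_i$ vanish. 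Second, along $\spin(2n)\hookrightarrow\spin(2n+1)$ the vector representation restricts to $1+\lambda_1$ and the spin representation of $\spin(2n+1)$ restricts to $\Delta^{+}+\Delta^{-}=\Delta$.

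For $\rho_4$ and $\rho_4'$ I would start from Yokota's branching rules for $\spin(9)\subset F_4$ in \cite{yokota-1967-f4}, which give $\rho_4|_{\spin(9)}=1+V_9+\Delta_9$ and $\rho_4'|_{\spin(9)}=\Lambda^2 V_9+\Delta_9$, the latter from the decomposition $\mathfrak{f}_4=\mathfrak{so}(9)\oplus\Delta_9$. Restricting once more to $\spin(8)$ and using $V_9|_{\spin(8)}=1+\lambda_1$, $\Delta_9|_{\spin(8)}=\Delta$, and $\Lambda^2 V_9|_{\spin(8)}=\Lambda^2(1+\lambda_1)=\lambda_1+\Lambda^2 V_8=\lambda_1+\lambda_2+4$, one collects terms to get $j_8^{*}(\rho_4)=2+\lambda_1+\Delta$ and $j_8^{*}(\rho_4')=4+\lambda_1+\Delta+\lambda_2$. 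For $\rho_6$ and $\rho_7$ I would quote Adams \cite[Corollaries 8.3, 8.2]{adams-1996}: there $\rho_6|_{\spin(10)}=1+V_{10}+\Delta^{+}$ and $\rho_7|_{\spin(12)}=2V_{12}+\Delta^{-}$, and substituting $V_{10}=V_{12}=\lambda_1$ reproduces the third and fourth formulas. Finally, Adams' construction of $E_8$ in \cite[Section 7]{adams-1996} presents $\mathfrak{e}_8$ as the $\spin(16)$-module $\mathfrak{so}(16)\oplus\Delta^{+}_{16}$, where $\Delta^{+}_{16}$ is one of the two $128$-dimensional half-spin representations; since $\rho_8$ is the complexification of this adjoint representation, the first remark gives $j_{16}^{*}(\rho_8)=\Lambda^2 V_{16}+\Delta^{+}=8+\lambda_2+\Delta^{+}$.

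The point that takes more than a citation is the bookkeeping of the two half-spin representations: one has to check that the symbols $\Delta^{+}$ and $\Delta^{-}$ in the five formulas can be chosen so as to come from a single coherent orientation on each $\spin(2n)$, compatibly with the commutative diagram relating the inclusions $j_{2n}$ and $i_{2n}$ (for instance, that the copy of $\spin(8)$ inside $F_4$ obtained through $\spin(9)$ is the same one obtained inside $E_6$ through $\spin(10)$, and likewise further down the chain). This matching of conventions is precisely what the explicit constructions of Yokota and Adams settle, and I expect it to be the only real obstacle; once it is in place, every identity reduces to a dimension count together with the weight identity $\Lambda^2 V_{2n}=\lambda_2+n$, with nothing further to verify.
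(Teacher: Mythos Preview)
Your proposal is correct and follows essentially the same route as the paper: the paper itself offers no proof beyond the paragraph immediately preceding the proposition, which cites Yokota for $\rho_4,\rho_4'$, Adams' Corollaries~8.2--8.3 for $\rho_6,\rho_7$, and Adams' construction of $\mathfrak{e}_8=\mathfrak{so}(16)\oplus\Delta^{+}$ together with $\mathfrak{so}(2n)=\Lambda^2 V_{2n}$ for $\rho_8$. Your write-up is in fact more explicit than the paper's---you pass through $\spin(9)\subset F_4$ and record the identity $\Lambda^2 V_{2n}=\lambda_2+n$ (which is indeed needed, since the paper's $\lambda_2$ has dimension $2n(n-1)$ rather than $\binom{2n}{2}$)---but the underlying inputs and the logical structure are the same.
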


Since the induced homomorphism $i_{2n-2}^*$  maps
$\lambda_1$, $\lambda_2$, $\Delta^+$, $\Delta^-$, $\Delta$
to $2+\lambda_1$, 
$2\lambda_1+\lambda_2$, 
$\Delta$, $\Delta$, $2\Delta$, 
respectively,
we have the following proposition.


\begin{proposition}
\label{proposition:2.2}
For $G=F_4, E_6, E_7, E_8$, let $\mathop{j}: \spin(8) \to G$ be the inclusion map.
In $R(\spin(8))$, we have
\begin{eqnarray*}
j^{*}(\rho_4)&=& 2+ \lambda_1 +\Delta, \\
j^{*}(\rho_6)&=& 3+ \lambda_1 +\Delta, \\
j^*(\rho_7)&=& 8+ 2\lambda_1+2\Delta, \\
j^*(\rho_8)&=& 32+8\lambda_1+8\Delta+\lambda_2.
\end{eqnarray*}
\end{proposition}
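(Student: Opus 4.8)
The plan is to reduce everything to a computation on a maximal torus and then iterate the branching formula for the block inclusions $i_{2n-2}$. First I would recall the standard fact that, for a compact connected Lie group, the restriction homomorphism $R(G)\to R(T)$ to the complex representation ring of a maximal torus is injective; applied to $\spin(2n)$ this means that any identity in $R(\spin(2n))$ may be verified after applying $\tilde{k}_n^{*}$, so it suffices to work with the explicit character formulas recorded in Section~\ref{section:2}.

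Second, I would pin down the effect of $i_{2n-2}^{*}$ on the generators. The map $i_{2n-2}:\spin(2n-2)\to\spin(2n)$ covers the block inclusion $SO(2n-2)\hookrightarrow SO(2n)$ fixing the last coordinate plane, so on the chosen maximal tori it is the homomorphism $\tilde{T}^{n-1}\to\tilde{T}^n$ given by $z_n\mapsto 1$ (equivalently $\theta_n=0$), and it sends the square-root generator $(z_1\cdots z_n)^{1/2}$ to $(z_1\cdots z_{n-1})^{1/2}$. Substituting $z_n=1$ into the formulas for $\tilde{k}_n^{*}(\lambda_1)$, $\tilde{k}_n^{*}(\lambda_2)$, $\tilde{k}_n^{*}(\Delta^{\pm})$ and recognizing the resulting symmetric functions in $z_1,\dots,z_{n-1}$ as $\tilde{k}_{n-1}^{*}$ of the corresponding classes, one obtains $i_{2n-2}^{*}(\lambda_1)=2+\lambda_1$, $i_{2n-2}^{*}(\lambda_2)=2\lambda_1+\lambda_2$, $i_{2n-2}^{*}(\Delta^{+})=i_{2n-2}^{*}(\Delta^{-})=\Delta$, and hence $i_{2n-2}^{*}(\Delta)=2\Delta$. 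The mildly delicate point here is that each half-spin representation of $\spin(2n)$ restricts to the full sum $\Delta=\Delta^{+}+\Delta^{-}$ of the two half-spin representations of $\spin(2n-2)$; this is seen by splitting the defining sum $\sum_{\varepsilon_1\cdots\varepsilon_n=1}(z_1^{\varepsilon_1}\cdots z_n^{\varepsilon_n})^{1/2}$ according to the sign of $\varepsilon_n$ after setting $z_n=1$.

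Third, I would invoke the commutative diagram preceding Proposition~\ref{proposition:2.1}: the inclusion $j:\spin(8)\to G$ factors as a string of horizontal $\spin$-maps followed by the appropriate vertical map $j_{2m}$, so $j^{*}$ equals the composite of $j_{2m}^{*}$ with the relevant iterate of the maps $i_{2n-2}^{*}$ computed in Step~2. Feeding the formulas of Proposition~\ref{proposition:2.1} into this composite gives the four claims: for $F_4$ it is Proposition~\ref{proposition:2.1} itself; for $E_6$ one applies $i_8^{*}$ to $1+\lambda_1+\Delta^{+}$; for $E_7$ one applies $i_8^{*}i_{10}^{*}$ to $2\lambda_1+\Delta^{-}$; and for $E_8$ one applies $i_8^{*}i_{10}^{*}i_{12}^{*}i_{14}^{*}$ to $8+\lambda_2+\Delta^{+}$. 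There is no conceptual obstacle beyond this; the only thing demanding care is the bookkeeping, chiefly in the $E_8$ case where four branching maps are composed, and where one must remember that $\lambda_1,\lambda_2,\Delta^{\pm}$ denote different representations in each $\spin(2n)$, so the substitution of Step~2 must be applied with the correct source and target at every stage.
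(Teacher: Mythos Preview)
Your proposal is correct and follows essentially the same approach as the paper: the paper simply states the branching rules $i_{2n-2}^{*}(\lambda_1)=2+\lambda_1$, $i_{2n-2}^{*}(\lambda_2)=2\lambda_1+\lambda_2$, $i_{2n-2}^{*}(\Delta^{\pm})=\Delta$, $i_{2n-2}^{*}(\Delta)=2\Delta$ and then composes them with Proposition~\ref{proposition:2.1}, whereas you additionally justify these rules via the torus characters, which is a welcome elaboration but not a different method. The iterated computation you outline (one branching step for $E_6$, two for $E_7$, four for $E_8$) is exactly what the paper intends and produces the stated formulas.
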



\section{Mod $3$ Chern classes} 
\label{section:3}

In this section, we prove Theorem~\ref{theorem:1.1}. We consider the following diagram 
of inclusion maps.
\[
\begin{diagram}
\node{\tilde{T}^4} \arrow{e,t}{\tilde{k}_4} \node{\spin(8)} \arrow{e,t}{j_8} \node{F_4} \\
\node{\mu} \arrow{n,l}{\iota_0} \arrow[2]{e,t}{\iota_1} \node[2]{A.}\arrow{n}
\end{diagram}
\]
The maximal torus $\tilde{T}^4$ of $\spin(8)$  is the maximal torus $T$ of $F_4$ we mentioned in Section 1.
By abuse of notation, we denote both the inclusion map of $\mu$ to $\tilde{T}^4$ 
and its composition with $\tilde{k}_4$ by the same symbol $\iota_0$. 
Let $\sqrt{0}$ be the nilradical of $H^{*}(BA;\mathbb{Z}/3)$ and $H^{*}(B\mu;\mathbb{Z}/3)$, so that we have the induced homomorphism
\[
\iota_1^*:H^{*}(BA;\mathbb{Z}/3)/\sqrt{0} \to H^{*}(B\mu;\mathbb{Z}/3)/\sqrt{0}=\mathbb{Z}/3[u_2].
\]

 
\begin{lemma}
\label{lemma:3.1}
The image of the induced homomorphism
\[
\iota^*:H^{*}(BF_4;\mathbb{Z}/3)\to H^{*}(B\mu;\mathbb{Z}/3)/\sqrt{0}
\]
is in $\mathbb{Z}/3[ u_2^{18}]$, i.e. $\mathop{\mathrm{Im}} \iota^* \subset \mathbb{Z}/3[u_2^{18}]\subset \mathbb{Z}/3[u_2]$.
\end{lemma}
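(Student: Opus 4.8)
Proof proposal.

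The plan is to factor $\iota^{*}$ through the restriction to $BA$ and to use that, modulo nilpotents, this restriction lands in the invariant ring of the Weyl group $W_{F_4}(A)=N_{F_4}(A)/C_{F_4}(A)$, which by \cite[Section 8]{andersen-2008} is $SL_3(\mathbb{Z}/3)$ acting on the degree-$2$ part $V$ of $H^{*}(BA;\mathbb{Z}/3)/\sqrt{0}$ (a $3$-dimensional $\mathbb{Z}/3$-vector space) through the contragredient of its action on $A$. Writing $H^{*}(BA;\mathbb{Z}/3)/\sqrt{0}=\mathbb{Z}/3[V]$ for the polynomial algebra on $V$, the two external inputs are: (i) this description of $W_{F_4}(A)$ and its representation; and (ii) the classical modular invariant theory giving $\mathbb{Z}/3[V]^{SL_3(\mathbb{Z}/3)}=\mathbb{Z}/3[d_{3,2},d_{3,1},L_3]$, a polynomial algebra in which $d_{3,1},d_{3,2}$ are the Dickson invariants of cohomological degrees $48,36$ and $L_3=\prod_{\ell}(\text{a generator of }\ell)$ runs over the $13$ lines of $V$, has cohomological degree $26$, and satisfies $L_3^{2}=-d_{3,0}$ for the top Dickson invariant $d_{3,0}$ of degree $52$ (here one uses that $SL_3(\mathbb{Z}/3)$ is generated by transvections). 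In fact the argument below only needs $W_{F_4}(A)\supseteq SL_3(\mathbb{Z}/3)$, so the precise identification merely pins down which invariant algebra occurs.

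First I would record the factorization. By the diagram of this section, $\iota$ is the composite of $\iota_1\colon\mu\to A$ with the inclusion $A\to F_4$, so that $\iota^{*}=\iota_1^{*}\circ\mathrm{res}$, where $\mathrm{res}\colon H^{*}(BF_4;\mathbb{Z}/3)\to H^{*}(BA;\mathbb{Z}/3)\to\mathbb{Z}/3[V]$ is the restriction along $A\to F_4$ followed by the quotient by the nilradical. Since inner automorphisms of $F_4$ act trivially on $H^{*}(BF_4;\mathbb{Z}/3)$, the image of $\mathrm{res}$ is fixed by $W_{F_4}(A)$ and hence contained in $\mathbb{Z}/3[V]^{SL_3(\mathbb{Z}/3)}=\mathbb{Z}/3[d_{3,2},d_{3,1},L_3]$. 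It therefore suffices to compute $\iota_1^{*}$ on these three algebra generators.

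The inclusion $\mu\to A$ induces a surjection $V\to H^{2}(B\mu;\mathbb{Z}/3)/\sqrt{0}=\mathbb{Z}/3\,u_2$ whose kernel $W$ is a $2$-dimensional subspace; denote the resulting algebra map $\mathbb{Z}/3[V]\to\mathbb{Z}/3[u_2]$ again by $\iota_1^{*}$. Then $\iota_1^{*}(L_3)=0$, since among the $13$ linear factors of $L_3$ there is one for each of the four lines contained in $W$, and each of these maps to $0$ (equivalently, $\iota_1^{*}(L_3)^{2}=-\iota_1^{*}(d_{3,0})=0$ in the domain $\mathbb{Z}/3[u_2]$ once the next computation is done). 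For the Dickson invariants I would apply $\iota_1^{*}$ to the defining identity
\[
\prod_{v\in V}(X-v)=X^{27}-d_{3,2}X^{9}+d_{3,1}X^{3}-d_{3,0}X .
\]
Since each element of $\mathbb{Z}/3\,u_2$ is the image of exactly $|W|=9$ vectors of $V$, the left-hand side becomes $\bigl(\prod_{w\in\mathbb{Z}/3\,u_2}(X-w)\bigr)^{9}=(X^{3}-u_2^{2}X)^{9}=X^{27}-u_2^{18}X^{9}$, using that raising to the ninth power is additive in characteristic $3$. Comparing coefficients yields $\iota_1^{*}(d_{3,2})=u_2^{18}$, $\iota_1^{*}(d_{3,1})=0$, and $\iota_1^{*}(d_{3,0})=0$. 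Hence $\iota_1^{*}\bigl(\mathbb{Z}/3[d_{3,2},d_{3,1},L_3]\bigr)=\mathbb{Z}/3[u_2^{18}]$, and therefore $\mathrm{Im}\,\iota^{*}=\iota_1^{*}(\mathrm{Im}\,\mathrm{res})\subseteq\mathbb{Z}/3[u_2^{18}]$, as claimed.

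I expect the only genuine obstacle to be marshalling the two external inputs correctly: extracting from \cite[Section 8]{andersen-2008} that $W_{F_4}(A)$ contains the full group $SL_3(\mathbb{Z}/3)$, so that the invariant ring is no larger than the Dickson-type algebra above, and keeping straight that $W_{F_4}(A)$ acts on $H^{2}(BA;\mathbb{Z}/3)$ through the contragredient of its action on $A$; together with the standard but slightly delicate point (ii) from modular invariant theory. The coefficient comparison itself is elementary. If one prefers to bypass invariant theory altogether, an alternative is to use Toda's computation of $H^{*}(BF_4;\mathbb{Z}/3)$ (cf.\ \cite{toda-1973}): the polynomial generators of degrees $4,8,20$ restrict to $0$ on $BA$ for degree reasons, while those of degrees $26,36,48$ restrict, up to nonzero scalars, to $L_3,d_{3,2},d_{3,1}$; the computation of $\iota_1^{*}$ above then finishes the argument in the same way.
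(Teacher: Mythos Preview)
Your proposal is correct and follows essentially the same route as the paper: factor $\iota^{*}$ through $(H^{*}(BA;\mathbb{Z}/3)/\sqrt{0})^{W(A)}$, identify $W(A)\cong SL_3(\mathbb{Z}/3)$ via \cite{andersen-2008}, and use the Dickson--type invariant ring together with the restriction of its generators to $\mathbb{Z}/3[u_2]$. The only difference is that the paper cites \cite[Corollary~1.4]{wilkerson-1982} for the values $\iota_1^{*}(c_{3,0})=\iota_1^{*}(c_{3,1})=0$, $\iota_1^{*}(c_{3,2})=u_2^{18}$, whereas you derive them directly from the splitting identity $\prod_{v\in V}(X-v)=X^{27}-d_{3,2}X^{9}+d_{3,1}X^{3}-d_{3,0}X$, which is a pleasant, self-contained computation.
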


\begin{proof}
It is well-known that the Weyl group $W(A)=N(A)/C(A)$  of $A$ in $F_4$ is isomorphic to the special linear group $SL_3(\mathbb{Z}/3)$. See the paper of Andersen, Grodal, M{\o}ller and Viruel \cite[Section 8]{andersen-2008}.
Moreover, $H^{*}(BA;\mathbb{Z}/3)/\sqrt{0}$ is a polynomial algebra with $3$ variables of degree $2$ 
and $SL_3(\mathbb{Z}/3)$ acts in the usual manner.
The ring of invariants is also a  polynomial algebra 
\[
(H^{*}(BA;\mathbb{Z}/3)/\sqrt{0})^{W(A)}=\mathbb{Z}/3[e_3, c_{3,1}, c_{3,2}].
\]
The invariants $e_3^2=c_{3,0}, c_{3,1}, c_{3,2}$ are known as Dickson invariants and their degrees are 
$52, 48, 36$, 
respectively. 
Moreover, the induced homomorphism $\iota_1^*$ maps $c_{3,0}, c_{3,1}, c_{3,2}$ to $0$, $0$, $u_2^{18}$, respectively. See Wilkerson's paper \cite[Corollary 1.4]{wilkerson-1982} for the details.
Since the induced homomorphism $\iota^*$ factors through
\[
(H^{*}(BA;\mathbb{Z}/3)/\sqrt{0})^{W(A)} \to H^{*}(B\mu;\mathbb{Z}/3)/\sqrt{0} , 
\]
the lemma follows.
\end{proof}

Next, we compute the total Chern class $c(\iota_0^*(\lambda_1 +\Delta))$.


\begin{proposition}
\label{proposition:3.2}
The total Chern class $c(\iota_0^*(\lambda_1 +\Delta))$ is equal to $1-u_2^{18}$.
\end{proposition}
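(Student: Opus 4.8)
The plan is to avoid identifying the embedding $\iota_0$ explicitly and instead to combine the representation-theoretic normal form of Proposition~\ref{proposition:2.1} with the restriction obtained in Lemma~\ref{lemma:3.1}. By Proposition~\ref{proposition:2.1} we have $j_8^{*}(\rho_4)=2+\lambda_1+\Delta$ in $R(\spin(8))$, and since $\iota=j_8\circ\iota_0$ this gives $\iota^{*}(\rho_4)=2+\iota_0^{*}(\lambda_1+\Delta)$ in $R(\mu)$. Hence, by naturality of Chern classes,
\[
c(\iota_0^{*}(\lambda_1+\Delta))=c(\iota^{*}(\rho_4))=\iota^{*}(c(\rho_4)).
\]
Now the total Chern class of any representation of the abelian group $\mu$ is a product of linear factors $1+c_1(z^{a})$, and each $c_1(z^{a})$ is a scalar multiple of $u_2$; thus $c(\iota_0^{*}(\lambda_1+\Delta))$ already lies in $\mathbb{Z}/3[u_2]$, the subring mapping isomorphically onto $H^{*}(B\mu;\mathbb{Z}/3)/\sqrt{0}$. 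Feeding this into Lemma~\ref{lemma:3.1} yields
\[
c(\iota_0^{*}(\lambda_1+\Delta))\in\mathbb{Z}/3[u_2^{18}].
\]

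Next I would use the representation theory of $\mu$ to restrict the possibilities. Since $\lambda_1+\Delta=\lambda_1+\Delta^{+}+\Delta^{-}$ is a genuine $24$-dimensional representation of $\spin(8)$, its restriction decomposes as
\[
\iota_0^{*}(\lambda_1+\Delta)=n_0\cdot 1+n_1\cdot z+n_2\cdot z^{2},\qquad n_i\ge 0,\quad n_0+n_1+n_2=24 .
\]
Choosing the generator so that $c_1(z)=u_2$ (the opposite choice merely interchanges $n_1$ and $n_2$, and $\mathbb{Z}/3[u_2^{18}]$ is stable under $u_2\mapsto -u_2$) and using $2\equiv-1\pmod 3$, we obtain in $\mathbb{Z}/3[u_2]$
\[
c(\iota_0^{*}(\lambda_1+\Delta))=(1+u_2)^{n_1}(1-u_2)^{n_2}.
\]

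The heart of the matter is then the elementary observation that, for $n_1+n_2\le 24$, the polynomial $(1+u_2)^{n_1}(1-u_2)^{n_2}$ lies in $\mathbb{Z}/3[u_2^{18}]$ only when $n_1=n_2\in\{0,9\}$. Applying the automorphism $u_2\mapsto -u_2$, which fixes $\mathbb{Z}/3[u_2^{18}]$ pointwise, gives $(1-u_2)^{n_1}(1+u_2)^{n_2}=(1+u_2)^{n_1}(1-u_2)^{n_2}$; as $\mathbb{Z}/3[u_2]$ is a unique factorization domain in which $1+u_2$ and $1-u_2$ are non-associate irreducibles, this forces $n_1=n_2=:n$, so the Chern class equals $(1-u_2^{2})^{n}$. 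If $n\ge 1$, write $n=3^{a}m$ with $3\nmid m$; then $(1-u_2^{2})^{n}=(1-u_2^{2\cdot 3^{a}})^{m}$ has coefficient $-m\ne 0$ in degree $2\cdot 3^{a}$, so membership in $\mathbb{Z}/3[u_2^{18}]$ forces $18\mid 2\cdot 3^{a}$, i.e.\ $a\ge 2$ and hence $9\mid n$; together with $2n\le 24$ this leaves only $n=9$ (besides $n=0$), and indeed $(1-u_2^{2})^{9}=1-u_2^{18}$.

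Finally I would rule out $n=0$, i.e.\ that $\mu$ acts trivially on $\lambda_1+\Delta$. Since $\mu\subset\tilde{T}^{4}\subset\spin(8)$ has order $3$ while $\ker(\spin(8)\to SO(8))$ is the central subgroup of order $2$, the homomorphism $\mu\to SO(8)$ underlying $\lambda_1$ is injective; its image is a nontrivial finite subgroup of $SO(8)$, hence acts nontrivially on $\mathbb{R}^{8}$ and so on $\mathbb{C}^{8}=\iota_0^{*}(\lambda_1)$, whence $(n_1,n_2)\ne(0,0)$. Therefore $n_1=n_2=9$ and $c(\iota_0^{*}(\lambda_1+\Delta))=(1-u_2^{2})^{9}=1-u_2^{18}$, as asserted. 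I do not expect a genuine obstacle along this route; the points requiring care are that Lemma~\ref{lemma:3.1} applies to $c(\iota^{*}(\rho_4))$ without loss (because it has no $u_1$-component) and the base-$3$ bookkeeping of the previous paragraph, which is where the actual content sits. A more hands-on alternative would be to read off the image of $\iota_0$ inside $\tilde{T}^{4}$ from the description of the non-toral subgroup $A\subset F_4$ in \cite{andersen-2008} and to evaluate the weight sums $\tilde{k}_4^{*}(\lambda_1)$, $\tilde{k}_4^{*}(\Delta^{\pm})$ of Proposition~\ref{proposition:2.1} directly; there the obstacle is pinning down the embedding, which the argument above circumvents.
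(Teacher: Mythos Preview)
Your argument is correct and shares the paper's overall strategy: identify $c(\iota_0^{*}(\lambda_1+\Delta))$ with $c(\iota^{*}(\rho_4))$, apply Lemma~\ref{lemma:3.1} to land in $\mathbb{Z}/3[u_2^{18}]$, and then use the injectivity of $\iota_0$ to rule out triviality. The difference lies in the middle step. The paper observes directly that, since $\dim(\lambda_1+\Delta)=24$, any element of $\mathbb{Z}/3[u_2^{18}]$ of degree at most $24$ in $u_2$ is already of the form $1+\alpha u_2^{18}$; then, writing $\iota_0^{*}(z_i)=z^{\alpha_i}$ and using that some $\alpha_i\neq 0$, one sees $c(\iota_0^{*}(\lambda_1))=\prod_i(1-\alpha_i^{2}u_2^{2})$ is divisible by $1-u_2^{2}$, whence so is the whole product, forcing $\alpha=-1$. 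Your route instead classifies all pairs $(n_1,n_2)$ with $n_1+n_2\le 24$ for which $(1+u_2)^{n_1}(1-u_2)^{n_2}\in\mathbb{Z}/3[u_2^{18}]$ via the $u_2\mapsto -u_2$ symmetry, unique factorization, and a $3$-adic valuation argument. This is more work, but it yields a little more: it recovers the multiplicities $n_1=n_2=9$ in the character decomposition, whereas the paper's argument only produces the total Chern class. The paper's version is the quicker way to the stated conclusion; yours is a legitimate, slightly more informative variant.
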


\begin{proof}
Since $\dim (\lambda_1+\Delta)=24$, 
and since $c(\iota_0^*(\lambda_1+\Delta))
=c(\iota^*(\rho_4)) \in \mathbb{Z}/3[u_2^{18}]$ by Lemma~\ref{lemma:3.1}, 
$c(\iota_0^*(\lambda_1+\Delta))$ is equal to $1+\alpha u_2^{18}$ for some $\alpha \in \mathbb{Z}/3$.
On the other hand, 
$\iota_0^*$ maps $z_i$ to $z^{\alpha_i}$ for some $\alpha_i \in \mathbb{Z}/3$ and, since $\iota_0$ is an inclusion map, 
$(\alpha_1, \alpha_2, \alpha_3, \alpha_4) \not= (0,0,0,0)$.
So, 
\[
c(\iota_0^*(\lambda_1))=\prod_{i=1}^{4} (1-\alpha_i^2 u_2^2)
\]
and $\alpha_i \not = 0$ for some $i$. Hence, $c(\iota_0^*(\lambda_1))$ is divisible by $1-u_2^2$.
Therefore, 
\[
c(\iota_0^{*}(\lambda_1+\Delta))=c(\iota_0^*(\lambda_1))c(\iota_0^*(\Delta))
\]
 is also divisible by $1-u_2^2$ and so $\alpha=-1$ in $\mathbb{Z}/3$.
\end{proof}

Next, we compute the total Chern class $c(\iota_0^*(\lambda_2))$.


\begin{proposition}
\label{proposition:3.3}
The total Chern class $c(\iota_0^*(\lambda_2))$ is equal to $1-u_2^{18}$.
\end{proposition}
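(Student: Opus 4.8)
The plan is to write the total Chern class $c(\iota_0^{*}(\lambda_2))$ explicitly in terms of the residues $\alpha_i\in\mathbb{Z}/3$ defined by $\iota_0^{*}(z_i)=z^{\alpha_i}$ (exactly as in the proof of Proposition~\ref{proposition:3.2}), to observe that it is automatically of the form $(1-u_2^2)^m$, and then to pin down the single integer $m$ by combining a trivial size bound with Lemma~\ref{lemma:3.1}.

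First I would use $\tilde{k}_4^{*}(\lambda_2)=\sum_{1\le i<j\le 4}(z_i+z_i^{-1})(z_j+z_j^{-1})$ to obtain
\[
c(\iota_0^{*}(\lambda_2))=\prod_{1\le i<j\le 4}\bigl(1-(\alpha_i+\alpha_j)^2u_2^2\bigr)\bigl(1-(\alpha_i-\alpha_j)^2u_2^2\bigr).
\]
Since $\beta^2\in\{0,1\}$ for every $\beta\in\mathbb{Z}/3$, each of these twelve factors equals $1$ or $1-u_2^2$, so $c(\iota_0^{*}(\lambda_2))=(1-u_2^2)^m$ for some integer $m$ with $0\le m\le 12$. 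Because $\iota_0$ is an inclusion, $(\alpha_1,\alpha_2,\alpha_3,\alpha_4)\ne(0,0,0,0)$; if, say, $\alpha_1\ne 0$, then $\alpha_1+\alpha_2$ and $\alpha_1-\alpha_2$ cannot both vanish mod $3$ (their sum is $2\alpha_1\ne 0$), so at least one factor is nontrivial and $m\ge 1$.

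Next I would feed this into Lemma~\ref{lemma:3.1} via the complexified adjoint representation $\rho_4'$ of $F_4$. By Proposition~\ref{proposition:2.1}, $j_8^{*}(\rho_4')=4+\lambda_1+\Delta+\lambda_2$, so naturality of Chern classes together with Proposition~\ref{proposition:3.2} gives
\[
\iota^{*}\bigl(c(\rho_4')\bigr)=c(\iota_0^{*}(\lambda_1+\Delta))\,c(\iota_0^{*}(\lambda_2))=(1-u_2^{18})(1-u_2^2)^m.
\]
Since $c(\rho_4')\in H^{*}(BF_4;\mathbb{Z}/3)$, Lemma~\ref{lemma:3.1} forces the right-hand side to lie in $\mathbb{Z}/3[u_2^{18}]$. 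Working mod $3$ the Frobenius gives $1-u_2^{18}=(1-u_2^2)^{9}$, so $(1-u_2^2)^{9+m}\in\mathbb{Z}/3[(u_2^2)^{9}]$; expanding $(1-y)^{N}=\prod_i(1-y^{3^i})^{a_i}$ where $N=\sum_i a_i3^i$ is the base-$3$ expansion, this happens exactly when the two lowest digits $a_0,a_1$ vanish, i.e. exactly when $9\mid N$. Hence $9\mid m$, and combined with $1\le m\le 12$ this leaves only $m=9$, so $c(\iota_0^{*}(\lambda_2))=(1-u_2^2)^{9}=1-u_2^{18}$.

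The real content — and the feature that lets the argument go through without any explicit description of the embedding $\mu\hookrightarrow\spin(8)$ — is the last step: a priori $m$ is an arbitrary integer in $\{1,\dots,12\}$, and one has to extract from the ambient restriction of Lemma~\ref{lemma:3.1} precisely the divisibility $9\mid m$. That rests on the translation, via the mod $3$ Frobenius, of the statement ``$(1-u_2^2)^{N}\in\mathbb{Z}/3[u_2^{18}]$'' into the arithmetic statement ``$9\mid N$''; the remaining computations, namely the expansion of total Chern classes of sums of line bundles and the identity $(1-u_2^2)^{9}=1-u_2^{18}$ in characteristic $3$, are routine.
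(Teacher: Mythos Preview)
Your proof is correct and follows essentially the same route as the paper: compute the factors of $c(\iota_0^{*}(\lambda_2))$ explicitly from $\iota_0^{*}(z_i)=z^{\alpha_i}$, show the result is nontrivial and divisible by $1-u_2^{2}$, and then feed it into Lemma~\ref{lemma:3.1} via the adjoint representation $\rho_4'$ (using Proposition~\ref{proposition:3.2}) to pin the answer down. The only cosmetic difference is in the endgame: the paper first deduces $c(\iota_0^{*}(\lambda_2))\in\mathbb{Z}/3[u_2^{18}]$ and then uses the dimension bound $\dim\lambda_2=24$ to write it as $1+\alpha u_2^{18}$, whereas you sharpen the first step to $c(\iota_0^{*}(\lambda_2))=(1-u_2^{2})^{m}$ with $1\le m\le 12$ and finish with a base-$3$ Lucas-type argument to get $9\mid m$.
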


\begin{proof}
As in the proof of the previous proposition, 
assume that $\iota_0^{*}(z_i)=z^{\alpha_i}$.
Let 
\[
f_{ij}=(1-(\alpha_i+\alpha_j) u_2)(1-(\alpha_i-\alpha_j) u_2)(1-(-\alpha_i+\alpha_j) u_2)(1-(-\alpha_i-\alpha_j) u_2).
\]
Then, \[
c(\iota_0^*(\lambda_2))=\prod_{1\leq i<j\leq 4}  f_{ij}
\]
and
\[
f_{ij}=1-2(\alpha_i^2+\alpha_j^2) u_2^2+(\alpha_i^2-\alpha_j^2)^2u_2^4.
\]
For $(\alpha_i^2, \alpha_j^2)=(1,1)$, we have $f_{ij}=1-u_2^2$.
For $(\alpha_i^2, \alpha_j^2)=(1,0)$ or $(0,1)$, we have $f_{ij}=1-2u_2^2+u_2^4=(1-u_2^2)^2$.
Since $(\alpha_1, \alpha_2, \alpha_3, \alpha_4)\not=(0,0,0,0)$, there exists $(i,j)$ 
such that $(\alpha_i, \alpha_j)\not = (0,0)$. Hence the total Chern class 
$c(\iota_0^*(\lambda_2))$ is not trivial and it is 
divisible by $1-u_2^2$.

Let us consider the total Chern class $c(\iota^*(\rho_4'))$. By Lemma~\ref{lemma:3.1}, it is in $\mathbb{Z}/3[u_2^{18}]$ and by Proposition~\ref{proposition:3.2}, we have
\[
c(\iota^*(\rho_4'))=c(\iota_0^*(\lambda_2)) c(\iota_0^*(\lambda_1+\Delta))=c(\iota_0^*(\lambda_2))(1-u_2^{18}). 
\]
So, $c(\iota_0^*(\lambda_2))$ is also in $\mathbb{Z}/3[u_2^{18}]$. Since $\dim \lambda_2=24$, 
$c(\iota_0^*(\lambda_2))=1+\alpha u_2^{18}$ for some $\alpha\in \mathbb{Z}/3$. Since $c(\iota_0^*(\lambda_2))$
is divisible by $1-u_2^2$, $\alpha=-1$ as in the proof of the previous proposition.
\end{proof}

Finally, we prove Theorem~\ref{theorem:1.1}.

\begin{proof}[Proof of Theorem~\ref{theorem:1.1}]
Using Propositions~\ref{proposition:2.1}, \ref{proposition:2.2} and using Propositions~\ref{proposition:3.2}, \ref{proposition:3.3} above, we have
\[
\begin{array}{rclcl}
c(\iota^*(\rho_4))&=& c(\iota_0^*(\lambda_1+\Delta)) &=& 1-u_2^{18}, \\
c(\iota^*(\rho_6))&=& c(\iota_0^*(\lambda_1+\Delta))&=&1-u_2^{18}, \\
c(\iota^*(\rho_7))&=& c(\iota_0^*(\lambda_1+\Delta))^2&=&( 1-u_2^{18})^2, \\
c(\iota^*(\rho_4))&=& c(\iota_0^*(\lambda_1+\Delta))^8 c(\iota_0^*(\lambda_2))&=& (1-u_2^{18})^9.  \qedhere
\end{array}
\]
\end{proof}


\appendix
\section{Mod $5$ Chern classes} 
\label{section:4}

Let $p$ be an odd prime number. Let $G$ be a  simply-connected, simple, compact connected Lie group.
If the integral homology of $G$ has no $p$-torsion, then the mod $p$ cohomology ring of its classifying space 
is a polynomial algebra and it is well-known. 
See, for example, the book of Mimura and Toda \cite{mimura-toda-1991}.
The integral homology of $G$ has $p$-torsion if and only if $(G, p)$ is one of 
$(F_4, 3), (E_6, 3), (E_7, 3), (E_8,3)$ and $(E_8, 5)$.
We dealt with the cases for $p=3$ in this paper. 
For completeness, in this appendix, we deal with the remaining case, $p=5$, $G=E_8$, that is, 
we prove the non-triviality of the mod $5$ Chern class $c_{100}(\rho_8)$ of the complexification of 
the adjoint representation $\rho_8$ 
of the exceptional Lie group $E_8$.

The mod $5$ analogue of Corollary~\ref{corollary:1.2} is as follows:

\begin{theorem}
\label{theorem:4.1}
The mod $5$ Chern class $c_{100}(\rho_8)$ is non-trivial.
Moreover, the mod $5$ Chern class $c_{100}(\rho_8)$ is indecomposable in $H^{*}(BE_8;\mathbb{Z}/5)$.
\end{theorem}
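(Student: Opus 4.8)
The plan is to mimic exactly the strategy that worked for the mod $3$ case, transporting it to $p=5$ and $G=E_8$. First I would locate, inside $E_8$, a suitable elementary abelian $5$-subgroup. The relevant fact from Andersen--Grodal--M\o ller--Viruel is that $E_8$ has a non-toral elementary abelian $5$-subgroup $A'$ of rank $3$ whose Weyl group $W(A')=N(A')/C(A')$ is (a subgroup closely related to) $SL_3(\mathbb{Z}/5)$, with $H^*(BA';\mathbb{Z}/5)/\sqrt{0}$ a polynomial algebra on three degree-$2$ generators on which $SL_3(\mathbb{Z}/5)$ acts in the standard way. As in Lemma~\ref{lemma:3.1}, pick a rank-one subgroup $\mu'\subset A'$ with $\mu'$ meeting a maximal torus nontrivially; then $H^*(B\mu';\mathbb{Z}/5)/\sqrt{0}=\mathbb{Z}/5[u_2]$, and the restriction map $H^*(BE_8;\mathbb{Z}/5)\to H^*(B\mu';\mathbb{Z}/5)/\sqrt{0}$ factors through the ring of Dickson invariants $(H^*(BA';\mathbb{Z}/5)/\sqrt{0})^{SL_3(\mathbb{Z}/5)}=\mathbb{Z}/5[c_{3,0}',c_{3,1}',c_{3,2}']$ in the three degree-$2$ variables. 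Wilkerson's formulas give the degrees of these Dickson invariants as $2(5^3-1)=248$, $2(5^3-5)=240$, $2(5^3-5^2)=200$, and, exactly as before, the edge invariant $c_{3,2}'$ restricts to $u_2^{100}$ under $\iota_1'^*$ while the other two restrict to $0$. Hence $\mathrm{Im}\,\iota'^*\subset \mathbb{Z}/5[u_2^{100}]$.

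Next I would compute $c(\iota'^*(\rho_8))$ by restricting further to $\spin(16)$, exactly as in Propositions~\ref{proposition:2.1}--\ref{proposition:2.2}: $j_{16}^*(\rho_8)=8+\lambda_2+\Delta^+$ in $R(\spin(16))$, and after restricting through $\spin(8)$ one has $j^*(\rho_8)=32+8\lambda_1+8\Delta+\lambda_2$ in $R(\spin(8))$. The subtlety is that $\mu'$ need not factor through this particular $\spin(8)$; but one only needs that $\mu'$ factors through \emph{some} maximal torus of $E_8$ on which the adjoint representation decomposes into the $248$ roots plus the $8$-dimensional trivial piece. Writing $\iota_0'^*(z_i)=z^{a_i}$ with $a_i\in\mathbb{Z}/5$ and $(a_1,\dots,a_8)\ne 0$, the total Chern class of $\iota_0'^*$ of the adjoint representation is $\prod_{i<j}(1-(a_i+a_j)^2u_2^2)(1-(a_i-a_j)^2u_2^2)$, a product of factors each divisible by $1-u_2^2$ whenever the pair $(a_i,a_j)$ is nonzero. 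Since the restriction lands in $\mathbb{Z}/5[u_2^{100}]$ and the top nonvanishing degree is bounded by $\dim_{\mathbb C}$ of the reduced representation ($=248$, so Chern classes through $c_{124}$, i.e.\ degree $\le 248$, meaning only $1$ and $u_2^{100}$ survive), $c(\iota'^*(\rho_8))=1+\alpha u_2^{100}$; divisibility by $1-u_2^2$ forces $\alpha=-1$ in $\mathbb{Z}/5$. Thus $c(\iota'^*(\rho_8))=1-u_2^{100}$, in particular $c_{50}(\iota'^*(\rho_8))=-u_2^{100}\ne 0$, which forces $c_{100}(\rho_8)$ (degree $100$) to be nonzero in $H^*(BE_8;\mathbb{Z}/5)$.

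Finally, for indecomposability: any decomposable element of $H^*(BE_8;\mathbb{Z}/5)$ in degree $100$ is a sum of products of generators of strictly lower positive degree; the indecomposable quotient of $H^*(BE_8;\mathbb{Z}/5)$ below degree $100$ is, by the known structure (Mimura--Sambe and related computations), concentrated in degrees whose products cannot hit $100$ while still restricting nontrivially to $\mathbb{Z}/5[u_2^{100}]$ --- more precisely, the image of any decomposable class under $\iota'^*$ lies in the subalgebra generated by images of lower-degree classes, all of which (by the same Dickson-invariant factoring) are polynomials in $u_2^{100}$ of degree a multiple of $100$, hence either $0$ or in degree $\ge 200$. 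Since $c_{100}(\rho_8)$ restricts to the degree-$100$ class $-u_2^{100}$, it cannot be decomposable.

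The main obstacle I expect is the bookkeeping in the first paragraph: pinning down the precise non-toral $5$-subgroup of $E_8$ of rank $3$, verifying its Weyl group is $SL_3(\mathbb{Z}/5)$ (or contains it with the same invariant theory), and confirming via Wilkerson that the lowest Dickson invariant $c_{3,2}'$ restricts to $u_2^{100}$ rather than to $0$ or a power --- the $E_8$ analogue of the clean $F_4$ statement used in Lemma~\ref{lemma:3.1}. The representation-theoretic and Chern-class computations, by contrast, are direct analogues of Propositions~\ref{proposition:3.2}--\ref{proposition:3.3}, with $18=(3^2+\dots)$ replaced throughout by $50$ and the exponent $9$ replaced by the appropriate power coming from $\dim\lambda_2$ versus $\dim(\lambda_1+\Delta)$ in rank $8$.
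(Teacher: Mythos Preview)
Your overall strategy matches the paper's, but there is a genuine gap in the Chern-class computation that the paper has to work harder to close.

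The error is the degree bound. A complex representation of dimension $d$ has Chern classes $c_1,\dots,c_d$, so the top degree is $2d$, not $d$. Here $\dim_{\mathbb{C}}\rho_8=248$, so the total Chern class lives in degrees up to $496$; since $u_2^{100}$ has degree $200$ and $u_2^{200}$ has degree $400$, \emph{both} can occur. Thus one only gets
\[
c(\iota'^*(\rho_8))=1+\alpha\,u_2^{100}+\beta\,u_2^{200},
\]
and your argument ``divisibility by $1-u_2^2$ forces $\alpha=-1$'' no longer pins down $\alpha$. (Your sentence ``Chern classes through $c_{124}$, i.e.\ degree $\le 248$'' is exactly where the slip occurs; related index confusions---``$c_{50}(\iota'^*(\rho_8))=-u_2^{100}$'' and ``$c_{100}(\rho_8)$ (degree $100$)''---stem from the same issue.)

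A second, smaller gap: in $\mathbb{Z}/5$ the nonzero squares are $\pm 1$, so the elementary factors $f_{ij}$ are not always divisible by $1-u_2^2$; e.g.\ $(\alpha_i^2,\alpha_j^2)=(1,1)$ gives $f_{ij}=1+u_2^2$. The paper therefore only asserts that $c(\iota^*(\rho_8))$ is a nontrivial product of copies of $1-u_2^2$ and $1+u_2^2$ (coming from both $\lambda_2$ and $\Delta^+$; your displayed product omits the half-spin weights). Since $(\pm 1)^{50}=1$, divisibility by either factor gives $1+\alpha+\beta=0$, hence
\[
c(\iota^*(\rho_8))=(1-u_2^{100})(1-\beta u_2^{100}),
\]
and unique factorization in $\mathbb{Z}/5[u_2]$ forces $1-\beta u_2^{100}$ (if $\beta\neq 0$) to be again a product of $1\pm u_2^2$, whence $\beta\in\{0,-1\}$. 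The paper does \emph{not} decide between $c(\iota^*(\rho_8))=1-u_2^{100}$ and $(1-u_2^{100})^2$; it only observes that in either case $\iota^*c_{100}(\rho_8)\in\{-u_2^{100},-2u_2^{100}\}$ is nonzero, which suffices for nontriviality and (via the Dickson-invariant lemma, as you correctly argue) for indecomposability.
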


To prove this theorem, we need the mod $5$ analogue of Lemma~\ref{lemma:3.1}.
As in the case $p=3$, $G=F_4$, there exists a non-toral maximal elementary abelian $5$-subgroup of rank $3$ in 
the exceptional Lie group $E_8$. 
We choose the maximal torus $T$ of $E_8$.
I necessary by replacing $A$ by its conjugate, we may assume that $A\cap T$ is non-trivial.
We choose a subgroup $\mu$ of $A\cap T$ of order $5$. 
Indeed, it is the cyclic group of order $5$. We denote by $\iota:\mu\to E_8$ the inclusion map.
The mod $5$ cohomology of $B \mu$ is 
\[
H^{*}(B\mu;\mathbb{Z}/5)=\mathbb{Z}/5[u_2] \otimes \Lambda(u_1),
\]
where $u_1$ is a generator of $H^{1}(B\mu;\mathbb{Z}/5)=\mathbb{Z}/5$ 
and $u_2$ is its image by the mod $5$ Bockstein homomorphism.
As in the previous section, we denote the nilradical by $\sqrt{0}$
and we denote the inclusion map of $\mu$ to $A$ by $\iota_1:\mu \to A$.


\begin{lemma}
\label{lemma:4.2}
The image of the induced homomorphism 
\[
\iota^*:H^{*}(BE_8;\mathbb{Z}/5) \to H^{*}(B\mu;\mathbb{Z}/5)/\sqrt{0}
\]
is in $\mathbb{Z}/5[u_2^{100}]\subset H^{*}(B\mu;\mathbb{Z}/5)/\sqrt{0}$.
\end{lemma}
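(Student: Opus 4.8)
The plan is to imitate the proof of Lemma~\ref{lemma:3.1} verbatim, replacing $F_4$ by $E_8$, the prime $3$ by $5$, and the Dickson invariants of $SL_3(\mathbb{Z}/3)$ by those of $SL_3(\mathbb{Z}/5)$. First I would recall from \cite[Section 8]{andersen-2008} that the Weyl group $W(A)=N(A)/C(A)$ of the chosen non-toral rank-$3$ elementary abelian $5$-subgroup $A\subset E_8$ is the full special linear group $SL_3(\mathbb{Z}/5)$, acting in the standard way on $H^{*}(BA;\mathbb{Z}/5)/\sqrt{0}$, which is a polynomial algebra $\mathbb{Z}/5[x_1,x_2,x_3]$ on three degree-$2$ generators. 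The induced homomorphism $\iota^{*}$ factors as
\[
H^{*}(BE_8;\mathbb{Z}/5)\to (H^{*}(BA;\mathbb{Z}/5)/\sqrt{0})^{W(A)} \xrightarrow{\iota_1^{*}} H^{*}(B\mu;\mathbb{Z}/5)/\sqrt{0}=\mathbb{Z}/5[u_2],
\]
so it suffices to understand the image of the ring of invariants under $\iota_1^{*}$.

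Next I would invoke Dickson's theorem: $(\mathbb{Z}/5[x_1,x_2,x_3])^{SL_3(\mathbb{Z}/5)}$ is a polynomial algebra on three generators, namely the top Dickson invariant $e_3$ (an $SL$-invariant whose square is the $GL$-Dickson class $c_{3,0}$) together with $c_{3,1}$ and $c_{3,2}$; in the usual grading where $\deg x_i=2$ these have cohomological degrees $2(5^3-1)/(5-1)=62$ for $e_3$, hence $124$ for $c_{3,0}$, and $2(5^3-5)=240$, $2(5^3-5^2)=200$ for $c_{3,1}$, $c_{3,2}$ respectively. The remaining computational input, which I would take from Wilkerson \cite[Corollary 1.4]{wilkerson-1982} exactly as in the $p=3$ case, is the behaviour of these invariants under restriction along the inclusion of a line $\mu\hookrightarrow A$: the classes $c_{3,0}$ and $c_{3,1}$ restrict to $0$, while $c_{3,2}$ restricts to $u_2^{100}$ (the point being that restriction to a $1$-dimensional subspace kills all the Dickson classes except the one of lowest degree, which becomes the appropriate power of the Euler class of the line, here $u_2^{(5^3-5^2)/2}=u_2^{50}$ squared, i.e. $u_2^{100}$). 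Since $e_3$ itself restricts to a square root of $0$, it maps into $\sqrt{0}$ and contributes nothing mod nilpotents.

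Combining these, the image of $(H^{*}(BA;\mathbb{Z}/5)/\sqrt{0})^{W(A)}$ under $\iota_1^{*}$ is exactly $\mathbb{Z}/5[u_2^{100}]\subset\mathbb{Z}/5[u_2]$, and therefore so is the image of $\iota^{*}$, which proves the lemma. The one point that requires a little care, rather than routine transcription, is pinning down the precise degrees of the Dickson invariants and confirming that it is genuinely the degree-$200$ invariant $c_{3,2}$ (and not $e_3$ or $c_{3,1}$) that survives restriction to the line with value $u_2^{100}$; I expect this bookkeeping with the Dickson degrees $2(5^3-5^i)$ for $i=0,1,2$ to be the main thing to get right, and everything else follows the template of Lemma~\ref{lemma:3.1} word for word.
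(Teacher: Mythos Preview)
Your approach is essentially identical to the paper's proof: factor $\iota^{*}$ through the $W(A)=SL_{3}(\mathbb{Z}/5)$-invariants, identify these as the Dickson algebra $\mathbb{Z}/5[e_{3},c_{3,1},c_{3,2}]$, and cite Wilkerson for the restriction $e_{3},c_{3,1}\mapsto 0$, $c_{3,2}\mapsto u_{2}^{100}$. One small bookkeeping slip to fix: for $p=5$ the relation is $e_{3}^{\,p-1}=e_{3}^{4}=c_{3,0}$, not $e_{3}^{2}$, so $c_{3,0}$ has cohomological degree $248$ rather than $124$; this does not affect your argument, since in the integral domain $\mathbb{Z}/5[u_{2}]$ any power of $e_{3}$ vanishing forces $e_{3}\mapsto 0$ outright (no need to invoke $\sqrt{0}$).
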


\begin{proof}
Since the induced homomorphism $\iota^*$ factors through 
\[
\iota_1^*: (H^{*}(BA;\mathbb{Z}/5)/\sqrt{0})^{W(A)} \to H^{*}(B\mu;\mathbb{Z}/5)/\sqrt{0},
\]
all we need to do is to recall the fact that the Weyl group $W(A)$ of $A$ in $E_8$ is $SL_3(\mathbb{Z}/5)$,
that 
\[
(H^{*}(BA;\mathbb{Z}/5)/\sqrt{0})^{W(A)}=\mathbb{Z}/5[e_3, c_{3,2}, c_{3,1}]
\]
and that 
the above induced homomorphism $\iota_1^*$ maps $e_3, c_{3,1}, c_{3,2}$  to $0, 0, u_2^{100}$, 
respectively
We find these facts in \cite[Section 8]{andersen-2008} and in  \cite[Corollary 1.4]{wilkerson-1982}.
\end{proof}

To compute $\iota^*(\rho_8)$, we need the following commutative diagram similar to the diagram in Section~\ref{section:3}.
However, in this case, 
the map $j_{16}:\spin(16) \to E_8$ is not  injective. 
\[
\begin{diagram}
\node{\tilde{T}^8} \arrow{e,t}{\tilde{k}_8} \node{\spin(16)} \arrow{e,t}{j_{16}} \node{E_8} \\
\node{\mu} \arrow{n,l}{\iota_0}  \arrow[2]{e,t}{\iota_1}\node[2]{A} \arrow{n}
\end{diagram}
\]
We choose the maximal torus $T$ of $E_8$ so that $j_{16}(\tilde{T}^{8})=T$.
Then, since $\tilde{T}^8 \to T$ is a double cover and since $\mu$ is of order $5$, 
there exists a map $\iota_0:\mu \to \tilde{T}^8$ 
such  that the above diagram commutes.

We use the following propositions to prove Theorem~\ref{theorem:4.1}.


\begin{proposition}
\label{proposition:4.3}
The total mod $5$ Chern class of $\iota_0^*(\lambda_2)$ is a product of copies of $1-u_2^2$ and $1+u_2^2$.
Moreover, it is non-trivial.
\end{proposition}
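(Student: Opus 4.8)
The plan is to mimic the proof of Proposition~\ref{proposition:3.3}, carrying out the same computation but over $\mathbb{Z}/5$ and with $8$ variables rather than $4$. First I would set $\iota_0^*(z_i) = z^{\alpha_i}$ for $i = 1, \dots, 8$, where now $\alpha_i \in \mathbb{Z}/5$. Since $\tilde{k}_8^*(\lambda_2) = \sum_{1 \leq i < j \leq 8} (z_i + z_i^{-1})(z_j + z_j^{-1})$, the representation $\iota_0^*(\lambda_2)$ decomposes, for each pair $i < j$, into the four one-dimensional representations $z^{\pm\alpha_i \pm \alpha_j}$. Hence the total Chern class factors as
\[
c(\iota_0^*(\lambda_2)) = \prod_{1 \leq i < j \leq 8} f_{ij}, \qquad
f_{ij} = \prod_{\varepsilon, \varepsilon' \in \{\pm 1\}} \bigl(1 + (\varepsilon \alpha_i + \varepsilon' \alpha_j) u_2\bigr),
\]
since the first Chern class of the line bundle corresponding to $z^m$ is $m u_2$ and $u_2$ has degree $2$.

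Next I would analyze each factor $f_{ij}$ according to the values of $\alpha_i^2, \alpha_j^2 \in \mathbb{Z}/5$, exactly as in Proposition~\ref{proposition:3.3}. Expanding, $f_{ij} = \bigl(1 - (\alpha_i + \alpha_j)^2 u_2^2\bigr)\bigl(1 - (\alpha_i - \alpha_j)^2 u_2^2\bigr)$ depends only on $\alpha_i^2$ and $\alpha_j^2$; one checks directly that in each case $f_{ij}$ is a product of factors of the form $1 - u_2^2$ and $1 + u_2^2$ (for instance, if exactly one of $\alpha_i, \alpha_j$ is zero then $f_{ij} = (1 - \alpha^2 u_2^2)^2$ where $\alpha^2 \in \{1, 4\}$, and $4 \equiv -1$ in $\mathbb{Z}/5$ gives $(1 + u_2^2)^2$; if both are nonzero one gets $(1 - (\alpha_i+\alpha_j)^2 u_2^2)(1 - (\alpha_i-\alpha_j)^2 u_2^2)$ with each squared coefficient in $\{0, 1, 4\}$). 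The key point is that every squared residue in $\mathbb{Z}/5$ lies in $\{0, 1, 4\} = \{0, 1, -1\}$, so each linear-in-$u_2^2$ factor is $1$, $1 - u_2^2$, or $1 + u_2^2$. Taking the product over all pairs, $c(\iota_0^*(\lambda_2))$ is itself a product of copies of $1 - u_2^2$ and $1 + u_2^2$, proving the first assertion.

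For non-triviality: since $\iota_0$ is (a lift of) an inclusion, not all $\alpha_i$ vanish, so there is a pair $(i,j)$ with $(\alpha_i, \alpha_j) \neq (0,0)$, and for that pair $f_{ij}$ is divisible by at least one of $1 - u_2^2$, $1 + u_2^2$ and is in particular nontrivial; being a product of nonzero-constant-term factors in the integral domain $\mathbb{Z}/5[u_2]$, the whole product $c(\iota_0^*(\lambda_2))$ is nontrivial. The main obstacle is nothing deep — it is just the bookkeeping of verifying that no unexpected coefficient can appear in $f_{ij}$, which reduces to the observation that the nonzero squares mod $5$ are precisely $\pm 1$; this is exactly the feature that made the mod $3$ computation work (where the only nonzero square is $1$) and it continues to hold mod $5$. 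Once this proposition and Lemma~\ref{lemma:4.2} are in hand, Theorem~\ref{theorem:4.1} follows by combining $j_{16}^*(\rho_8) = 8 + \lambda_2 + \Delta^+$ with a computation of $c(\iota_0^*(\Delta^+))$ and the constraint that the image of $\iota^*$ lies in $\mathbb{Z}/5[u_2^{100}]$, forcing the total Chern class of $\iota^*(\rho_8)$ to be $1 - u_2^{100}$ (up to sign), whence $c_{100}(\rho_8) \neq 0$ and, by degree reasons, indecomposable.
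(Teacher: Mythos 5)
Your proof is correct and follows essentially the same route as the paper: restrict $\lambda_2$ to the maximal torus, split it into line bundles $z^{\pm\alpha_i\pm\alpha_j}$, and use the fact that the squares in $\mathbb{Z}/5$ are $\{0,1,-1\}$ so that each quadratic factor is $1$, $1-u_2^2$, or $1+u_2^2$. Your way of pairing the four linear factors into $\bigl(1-(\alpha_i+\alpha_j)^2u_2^2\bigr)\bigl(1-(\alpha_i-\alpha_j)^2u_2^2\bigr)$ is in fact slightly tidier than the paper's case analysis on $(\alpha_i^2,\alpha_j^2)$, which omits listing the case $(\alpha_i^2,\alpha_j^2)=(1,-1)$ (where $f_{ij}=(1-u_2^2)(1+u_2^2)$) even though the conclusion still holds there.
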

\begin{proof}
Let 
\[
f_{ij}=(1-(\alpha_i+\alpha_j)u_2)(1-(-\alpha_i+\alpha_j)u_2)(1-(\alpha_i-\alpha_j)u_2)(1-(-\alpha_i-\alpha_j)u_2).
\]
Then, we have
\[
c(\iota_0^*(\lambda_2))=\prod_{1\leq i< j\leq 8} f_{ij}
\]
and 
\[
f_{ij}=1-2(\alpha_i^2+\alpha_j^2) u_2^2+(\alpha_i^2-\alpha_j^2)^2 u_2^4.
\]
In $\mathbb{Z}/5$, $\alpha_i^2=0$ or $\pm 1$.
So, 
$f_{ij}=1+u_2^2$ for $(\alpha_i^2, \alpha_j^2)=(1,1)$, 
$f_{ij}=1-u_2^2$ for $(\alpha_i^2, \alpha_j^2)=(-1,-1)$, 
$f_{ij}=(1-u_2^2)^2$ for $(\alpha_i^2, \alpha_j^2)=(1,0), (0,1)$, 
$f_{ij}=(1+u_2^2)^2$ for $(\alpha_i^2, \alpha_j^2)=(-1,0), (0,-1)$, 
$f_{ij}=1$ for $(\alpha_i^2, \alpha_j^2)=(0,0)$.
Since $\mu$ is a non-trivial subgroup of $\tilde{T}^8$,  $\alpha_i$ is non-zero for some $i$. So, the total Chern class 
is not equal to $1$
and so we have the proposition.
\end{proof}


\begin{proposition}
\label{proposition:4.4}
The total mod $5$ Chern class of $\iota_0^*(\Delta^+)$ is also a product of copies of $1-u_2^2$ and $1+u_2^2$.
\end{proposition}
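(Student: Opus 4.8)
The plan is to compute $\tilde{k}_8^*(\Delta^+)$ explicitly as a sum of $128$ half-integral monomials in the $z_i$, pull back along $\iota_0$, and factor the resulting total Chern class. Recall that $\iota_0^*(z_i) = z^{\alpha_i}$ for some $\alpha_i \in \mathbb{Z}/5$ (using $z^5 = 1$ in $R(\mu)$, though we only need $z$ modulo the relation $z^5=1$), so that $\iota_0^*$ sends the weight $(z_1^{\varepsilon_1}\cdots z_8^{\varepsilon_8})^{1/2}$ to $z$ raised to the power $\tfrac12(\varepsilon_1\alpha_1 + \cdots + \varepsilon_8\alpha_8)$. Each such weight contributes a linear factor $1 - \tfrac12(\varepsilon_1\alpha_1 + \cdots + \varepsilon_8\alpha_8)u_2$ to the total Chern class, and the weights come in pairs $\pm$, so the total Chern class is a product over all sign-patterns with $\varepsilon_1\cdots\varepsilon_8 = 1$ (taken up to overall sign) of quadratic factors of the form $1 - \tfrac14 S^2 u_2^2$ where $S = \sum_i \varepsilon_i\alpha_i$.

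So the first step is purely arithmetic in $\mathbb{Z}/5$: I need to check that for every choice of signs, $\tfrac14 S^2 \in \{0, 1, -1\} \subset \mathbb{Z}/5$, which forces each quadratic factor to be $1$, $1 - u_2^2$, or $1 + u_2^2$. Since $\tfrac14 = 4 = -1$ in $\mathbb{Z}/5$, the factor is $1 + S^2 u_2^2$, and $S^2 \in \{0, 1, -1\}$ because every square in $\mathbb{Z}/5$ is $0$, $1$, or $4 = -1$; hence each factor is $1$, $1 + u_2^2$, or $1 - u_2^2$. That is the entire content, and it is immediate once the weight structure is written down. The only real care needed is the half-integer bookkeeping: one should note that $S = \sum_i \varepsilon_i\alpha_i$ always has the same parity as $\sum_i \alpha_i$ (changing a sign changes $S$ by an even integer), so the exponents $\tfrac12 S$, while a priori half-integers, reduce to well-defined elements of $\mathbb{Z}/5$ after choosing a representative of $z^{1/2}$ in $R(\mu)$ consistently — exactly as the existence of $\iota_0$ lifting through the double cover $\tilde{T}^8 \to T$ guarantees. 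Equivalently, and more cleanly, one works directly with $\iota_0^*(z_i^{1/2}) = z^{\beta_i}$ for $\beta_i \in \mathbb{Z}/5$ with $2\beta_i = \alpha_i$, and then $S' = \sum_i \varepsilon_i\beta_i$ is an honest element of $\mathbb{Z}/5$, each factor is $1 - (S')^2 u_2^2 \cdot ?$ — but again $(S')^2 \in \{0,\pm 1\}$ does the job.

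I would therefore carry out the proof in the following order: (1) write $\iota_0^*(z_i^{1/2}) = z^{\beta_i}$ for suitable $\beta_i \in \mathbb{Z}/5$, legitimized by the lift $\iota_0$ constructed just before the statement; (2) expand $\tilde{k}_8^*(\Delta^+) = \sum_{\varepsilon_1\cdots\varepsilon_8 = 1}(z_1^{\varepsilon_1}\cdots z_8^{\varepsilon_8})^{1/2}$ and observe that under $\iota_0^*$ this becomes a sum of characters $z^{\sum \varepsilon_i\beta_i}$, grouped into $\pm$ pairs; (3) deduce $c(\iota_0^*(\Delta^+)) = \prod 1 - (\textstyle\sum_i \varepsilon_i\beta_i)^2 u_2^2$ over one representative per pair; (4) invoke that every square in $\mathbb{Z}/5$ lies in $\{0, 1, -1\}$ to conclude each factor is $1$, $1 - u_2^2$, or $1 + u_2^2$. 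The one subtlety — and the closest thing to an obstacle — is step (1): making sure the half-weights pull back consistently to genuine characters of $\mu$, i.e. that the $\beta_i$ exist. But this is precisely what the diagram and the lifting $\iota_0:\mu\to\tilde{T}^8$ established right before Proposition~\ref{proposition:4.4} provide, so there is no genuine difficulty; the rest is the same elementary $\mathbb{Z}/5$-arithmetic already used in the proof of Proposition~\ref{proposition:4.3}.
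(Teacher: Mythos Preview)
Your proposal is correct and follows essentially the same route as the paper: pair each half-spin weight with its negative, pull back to $\mu$, and use that squares in $\mathbb{Z}/5$ lie in $\{0,\pm 1\}$. The paper streamlines your half-integer bookkeeping by simply writing $\iota_0^*\bigl((z_1^{\varepsilon_1}\cdots z_8^{\varepsilon_8})^{1/2}\bigr)=z^{\alpha_{\varepsilon_1\dots\varepsilon_8}}$ for some $\alpha_{\varepsilon_1\dots\varepsilon_8}\in\mathbb{Z}/5$ without ever relating these exponents to the individual $\alpha_i$ or $\beta_i$, since the only fact needed is that $\alpha_{\varepsilon_1\dots\varepsilon_8}^2\in\{0,\pm 1\}$.
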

\begin{proof}
Suppose that 
$i_0^*:R(\spin(16)) \to R(\mu)$ maps $(z_1^{\varepsilon_1} \cdots z_8^{\varepsilon_8})^{1/2}$ to 
$z^{\alpha_{\varepsilon_1\dots\varepsilon_8}}$.
Then, it maps
$(z_1^{\varepsilon'_1} \cdots z_8^{\varepsilon'_8})^{1/2}$ to $z^{-\alpha_{\varepsilon_1\dots\varepsilon_8}}$, where
$\varepsilon_i'=-\varepsilon_i$, 
and we have
\[
c(\iota_0^*(\Delta^+))=\prod_{\varepsilon_1=1, \varepsilon_1 \varepsilon_2\cdots \varepsilon_8=1} (1-\alpha_{\varepsilon_1\varepsilon_2\dots\varepsilon_8}^2 u_2^2).
\]
Since $\alpha_{\varepsilon_1\dots \varepsilon_8}^2=0$ or $\pm 1$, we have the desired result.
\end{proof}

Now we complete the proof of Theorem~\ref{theorem:4.1}.

\begin{proof}[Proof of Theorem~\ref{theorem:4.1}]
By Propositions~\ref{proposition:4.3}, \ref{proposition:4.4}, 
the total Chern class $c(\iota^*(\rho_8))$ is a product of copies of $1-u_2^2$ and $1+u_2^2$
and it is non-trivial.
On the other hand, by Lemma~\ref{lemma:4.2}, since $\dim (\lambda_2+\Delta^+)=240$, 
\[
c(\iota^*(\rho_8))=1+\alpha u_2^{100}+\beta u_2^{200}
\]
for some $\alpha, \beta \in \mathbb{Z}/5$ and $(\alpha, \beta) \not=(0,0)$. Since it is divisible by $1-u^2$ or $1+u_2^2$, 
we have $1+\alpha+\beta=0$ in $\mathbb{Z}/5$
and 
\[
c(\iota^*(\rho_8))=1+(-\beta-1) u_2^{100}+\beta u_2^{200}=(1-u_2^{100})(1-\beta u_2^{100}).
\]
Since it is a product of copies of $1-u_2^2$ and $1+u_2^2$, $1+\beta u_2^{100}$ is also divisible by $1-u_2^2$
or $1+u_2^2$ if $\beta \not=0$.
So, $\beta=0$ or $-1$ and we have that $c(\iota^*(\rho_8))$ is equal to $1-u_2^{100}$ or $(1-u_2^{100})^2$.
In particular, $c_{100}(\rho_8)=-u_2^{100}$ or $-2u_2^{100}$ and by Lemma~\ref{lemma:4.2}, it is indecomposable in $H^{*}(BE_8;\mathbb{Z}/5)$.
\end{proof}

\end{document}